\newtheorem{theorem}{Theorem}[section]
\newtheorem{problem}[theorem]{Problem}
\theoremstyle{definition}
\theoremstyle{remark}
\numberwithin{equation}{section}
\newcommand{\Sdim}{\mbox{\rm S-dim}}
\newcommand{\SDim}{\mbox{\rm S-Dim}}
\newcommand{\U}{\mathcal U}
\newcommand{\C}{\mathcal C}
\newcommand{\Lip}{\mathrm{Lip}}
\newcommand{\e}{\varepsilon}
\newcommand{\phii}{\varphi}
\newcommand{\IN}{\mathbb N}
\newcommand{\Z}{\mathbb Z}
\newcommand{\IR}{\mathbb R}
\newcommand{\diam}{\mathrm{diam}}
\begin{document}

\title{A 1-dimensional Peano continuum\\ which is not an IFS attractor}
\author{Taras Banakh}
\address{Instytut Matematyki, Jan Kochanowski University, Kielce, Poland and\newline Faculty of Mechanics and Mathematics, Ivan Franko National University of Lviv, Ukraine}
\email{t.o.banakh@gmail.com}

\author{Magdalena Nowak}
\address{Instytut Matematyki, Jan Kochanowski University, Kielce, Poland and\newline Instytut Matematyki, Jagiellonian University, Krakow, Poland }
\email{magdalena.nowak805@gmail.com}
\thanks{The second author was supported in part by PHD fellowships important for regional development}

\subjclass[2000]{Primary 28A80; 54D05; 54F50; 54F45}
\keywords{Fractal, Peano continuum, Iterated Function System, IFS-attractor}

\commby{}

\begin{abstract}
Answering an old question of M.Hata, we construct an example of a 1-dimensional Peano continuum which is~not homeomorphic to an attractor of IFS.
\end{abstract}

\maketitle

\section{Introduction}

A compact metric space $X$ is called an {\em IFS-attractor} if $X=\bigcup_{i=1}^nf_i(X)$ for some contracting self-maps $f_1,\dots,f_n:X\to X$. In this case the family $\{f_1,\dots,f_n\}$ is~called an {\em iterated function system} (briefly, an IFS), see \cite{B}. We recall that a~map $f:X\to X$ is {\em contracting} if its Lipschitz constant
\begin{equation*}
\Lip(f)=\sup_{x\ne y}\frac{d(f_i(x),f_i(y))}{d(x,y)}
\end{equation*}
is less than 1.

Attractors of IFS appear naturally in the Theory of Fractals, see \cite{B}, \cite{Ed}. 
Topological properties of IFS-attractors were studied by M.Hata in \cite{Hata}. In particular, he  observed that each connected IFS-attractor $X$ is locally connected. The reason is that $X$ has property S. We recall \cite[8.2]{Nad} that a metric space $X$ has {\em property S} if for every $\e>0$ the space $X$ can be covered by finite number of connected subsets of diameter $<\e$. It is well-known \cite[8.4]{Nad} that a connected compact metric space $X$ is locally connected if and only if it has property $S$ if and only if $X$ is a {\em Peano continuum} (which means that $X$ is the continuous image of the interval $[0,1]$). Therefore, a compact space $X$ is not homeomorphic to an~IFS-attractor whenever $X$ is connected but not locally connected. Now it is natural to ask if~there is a Peano continuum homeomorphic to no IFS-attractor. An~easy answer is ``Yes'' as every IFS-attractor has finite topological dimension, see \cite{Ed}. Consequently, no infinite-dimensional compact topological space is  homeomorphic to an IFS-attractor. In such a way we arrive to the following question posed by M.~Hata in \cite{Hata}.

\begin{problem}\label{hata} Is each finite-dimensional Peano continuum homeomorphic to an IFS-attractor?
\end{problem}

In this paper we shall give a negative answer to this question. Our counterexample is a rim-finite plane Peano continuum. A topological space $X$ is called {\em rim-finite} if it has a base of the topology consisting of open sets with finite boundaries. It follows that each compact rim-finite space $X$ has dimension $\dim(X)\le 1$.

\begin{theorem}\label{th1} There is a rim-finite plane Peano continuum homeomorphic to no IFS-attractor.
\end{theorem}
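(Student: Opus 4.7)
The plan is to identify a quantitative invariant of compact metric Peano continua that is automatically bounded for every IFS-attractor, and then to construct a rim-finite plane Peano continuum which forces this invariant to blow up in every compatible metric. Prompted by Hata's observation that connected IFS-attractors have property $S$, the natural candidate is an S-dimension
\begin{equation*}
\Sdim(X,d)=\limsup_{\e\to 0}\frac{\log N_S(X,\e)}{\log(1/\e)},
\end{equation*}
where $N_S(X,\e)$ denotes the minimal number of connected subsets of diameter $\le\e$ needed to cover $(X,d)$, together with its topological refinement $\SDim(X)$ extracted from $\Sdim$ by quantifying suitably over compatible metrics or over topologically invariant data.

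The first step, the easy direction, is to bound $\Sdim$ on IFS-attractors. If $X=\bigcup_{i=1}^n f_i(X)$ with $q=\max_i\Lip(f_i)<1$, iterating $k$ times yields $X=\bigcup_{|w|=k}f_w(X)$, a cover by $n^k$ sets, each connected as a continuous image of $X$ and of diameter at most $q^k\diam X$. Taking $\e\in(q^{k+1}\diam X,q^k\diam X]$ gives $N_S(X,\e)\le n^k$, hence $\Sdim(X,d)\le\log n/\log(1/q)<\infty$.

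For the counterexample I would construct a \emph{hairy interval}. Put $I=[0,1]\times\{0\}\subset\R^2$, and at each $p_k=(1/k,0)$ attach a fan $F_k$ consisting of $N_k=k^k$ straight tines of length $\ell_k=k^{-3}$ emerging into the upper half-plane with small angular spread, all fans pairwise disjoint. Set $X=I\cup\bigcup_{k\ge 1}F_k$. Since $\diam F_k\to 0$, $X$ is compact, connected, and locally connected -- in particular at the accumulation point $(0,0)$, whose small neighborhoods consist of all sufficiently far fans together with an initial segment of $I$ -- so $X$ is a Peano continuum; it is plainly planar, and rim-finite because at $(0,0)$ the open strips $\{x<1/k\}\cap X$ have boundary $\{p_k\}$ in $X$ (fans sitting strictly above $y=0$), while at every other point small neighborhoods with finite boundary are immediate. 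The key calculation: a connected subset of $X$ joining the midpoints of two distinct tines of $F_k$ must pass through $p_k$, so its Euclidean diameter is $\ge\ell_k/2$; covering the $N_k$ tine-midpoints of $F_k$ therefore requires $N_S(X,\ell_k/3)\ge N_k=k^k$, which grows faster than any polynomial in $\ell_k^{-1}=k^3$, giving $\Sdim(X,d_{\rm Eucl})=\infty$.

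The main obstacle, and the crux of the proof, is upgrading this Euclidean computation to a genuinely topological conclusion, ruling out the possibility that some \emph{other} metric compatible with the topology of $X$ reduces $\Sdim$ to a finite value; indeed, by conjugating with a self-homeomorphism of $\R^2$ one can shrink each fan $F_k$ to diameter $\eta_k\to 0$ arbitrarily fast, which a priori can offset the large $N_k$. The resolution must exploit data invariant under homeomorphism rather than merely under isometry: the branch orders $\mathrm{ord}(p_k)=N_k+2$ are homeomorphism invariants, the fans accumulate topologically at $(0,0)$ in any metric, and one should combine these two facts to produce a metric-free lower bound on $\Sdim$ (equivalently, on the more flexible $\SDim$). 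Properly packaged, this metric-free lower bound contradicts the upper bound of Step 2 and shows that no compatible metric on $X$ makes it an IFS-attractor, completing the proof.
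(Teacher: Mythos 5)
Your Step 1 (the bound $\Sdim(X,d)\le \ln n/\ln(1/q)$ for IFS-attractors) is exactly the paper's Theorem~\ref{Sdim} and is fine. The genuine gap is your Step 3, which you leave as a hope (``properly packaged, this metric-free lower bound \dots''): you never produce a lower bound valid for \emph{every} compatible metric, and for your hairy interval no such bound exists. Indeed, the worry you raise is fatal. Re-embed $X$ in the plane keeping the base interval but shrinking the $k$-th fan so that its tines have length $\eta_k=1/N_k=k^{-k}$; this is a homeomorphic copy of $X$, hence its Euclidean metric is a compatible metric on $X$. For it one checks $S_\e(X)\le C\log(1/\e)/\e$: every fan with $\eta_k\le\e/3$ is absorbed into one of the $O(1/\e)$ connected pieces covering the base interval, while each fan with $\eta_k>\e/3$ has $N_k=1/\eta_k<3/\e$ tines and so costs at most $O(1/\e)$ connected sets, and there are only about $\log(1/\e)/\log\log(1/\e)$ such fans. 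Hence the infimum over compatible metrics of the S-dimension of the hairy interval is $1$, not $\infty$. The branch orders at the points $p_k$ cannot rescue the argument: the combinatorial complexity of each fan is localized at a single point, so a homeomorphism can shrink it away faster than it grows, and no topologically invariant blow-up of $S_\e$ results.

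This is precisely why the paper does not use fans. Its example $M$ is the ``shark teeth'' continuum, in which the $k$-th complicated piece is the graph of $\frac1k\varphi_{n_k}$ over the \emph{whole} interval $[0,1]$; the oscillating curves span the entire base and accumulate on all of it, so their horizontal extent cannot be shrunk independently of the base by any homeomorphism. Even then the metric-free statement $\Sdim(M)=\infty$ is not elementary: the paper obtains it by quoting Theorem~7.3(6) of \cite{BT}, whose hypothesis is that the number of teeth $2^{n_k}$ grows more slowly than every positive power of $k$ (note the direction: it is the \emph{slow} growth that forces infinite $\Sdim$, quite unlike your $N_k=k^k$ heuristic). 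So your proposal reproduces the easy half of the paper's argument, but the essential half --- a homeomorphism-invariant lower bound on the S-dimension --- is missing, and the specific continuum you propose demonstrably fails to have the property your strategy requires.
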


It should be mentioned that an example of a Peano continuum $K\subset\IR^2$, which is not {\em isometric} to an IFS-attractor was constructed by M.Kwieci\'nski in~\cite{Kw}. However the continuum of Kwieci\'nski is {\em homeomorphic} to an IFS-attractor, so it does~not give an answer to Problem~\ref{hata}.

\section{S-dimension of IFS-attractors}\label{s1}

In order to prove Theorem~\ref{th1} we shall observe that each connected IFS-attractor has finite $S$-dimension. This dimension was introduced and studied in \cite{BT}.

The $S$-dimension $\SDim(X)$ is defined for each metric space $X$ with property $S$. For each $\e>0$ denote  by $S_\e(X)$ the smallest number of connected subsets of~diameter $<\e$ that cover the space $X$ and let $$\SDim(X)=\operatornamewithlimits{\overline{\lim}}\limits_{\e\to+0}-\frac{\ln S_\e(X)}{\ln\e}.$$ For each Peano continuum $X$ we can also consider a topological invariant
\begin{equation*}
\Sdim(X)=\inf\{\SDim(X,d):\mbox{$d$ is a metric generating the topology of $X$}\}.
\end{equation*}
By \cite[5.1]{BT}, $\Sdim(X)\ge\dim(X)$, where $\dim(X)$ stands for the covering topological dimension of $X$.

\begin{theorem}\label{Sdim} Assume that a connected compact metric space $X$ is an attractor of~an~IFS $f_1,f_2,\dots,f_n:X\to X$ with contracting constant $\lambda=\max_{i\le n}\Lip(f_i)<1$. Then $X$ has finite $S$-dimensions \begin{equation*}
\Sdim(X)\le \SDim(X)\le -\frac{\ln(n)}{\ln(\lambda)}.
\end{equation*}
\end{theorem}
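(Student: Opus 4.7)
The plan is to exploit the self-similarity $X=\bigcup_{i=1}^n f_i(X)$ by iterating it, producing coverings of $X$ by exponentially many pieces of exponentially small diameter, and then plug these into the definition of $\SDim$.

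First, for each $k\in\IN$ and each word $\sigma=(\sigma_1,\dots,\sigma_k)\in\{1,\dots,n\}^k$, set $f_\sigma=f_{\sigma_1}\circ\cdots\circ f_{\sigma_k}$. Iterating the IFS identity $k$ times I get
\begin{equation*}
X=\bigcup_{\sigma\in\{1,\dots,n\}^k}f_\sigma(X).
\end{equation*}
Two easy observations make this cover useful: each $f_\sigma(X)$ is a continuous image of the connected space $X$, hence connected; and, by the submultiplicativity of the Lipschitz constant, $\Lip(f_\sigma)\le\lambda^k$, so $\diam(f_\sigma(X))\le\lambda^k\,\diam(X)$.

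Therefore, for every $\e>\lambda^k\,\diam(X)$ the cover above witnesses
\begin{equation*}
S_\e(X)\le n^k.
\end{equation*}
Given an arbitrary small $\e>0$, I choose $k=k(\e)$ to be the smallest positive integer with $\lambda^k\,\diam(X)<\e$; then $k(\e)\le 1+\ln(\e/\diam(X))/\ln(\lambda)$, and hence
\begin{equation*}
-\frac{\ln S_\e(X)}{\ln\e}\le-\frac{k(\e)\ln n}{\ln\e}\le -\frac{\ln n}{\ln\e}\Bigl(1+\frac{\ln\e-\ln\diam(X)}{\ln\lambda}\Bigr).
\end{equation*}
Letting $\e\to 0^+$ (so $\ln\e\to-\infty$) the right-hand side tends to $-\ln(n)/\ln(\lambda)$, which yields $\SDim(X)\le-\ln(n)/\ln(\lambda)$. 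The inequality $\Sdim(X)\le\SDim(X)$ is immediate from the definition of $\Sdim$ as an infimum over metrics, applied with the given metric on $X$.

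There is no real obstacle here: the only place one has to be slightly careful is the strict inequality ``$<\e$'' in the definition of $S_\e$, which is why I take $k(\e)$ to be the smallest integer with $\lambda^k\diam(X)<\e$ rather than $\le\e$. The connectedness hypothesis on $X$ is used exactly once, to ensure that the pieces $f_\sigma(X)$ qualify as connected sets in the definition of $S_\e(X)$.
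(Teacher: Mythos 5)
Your proof is correct and follows essentially the same route as the paper: iterate the IFS to cover $X$ by the $n^k$ connected sets $f_\sigma(X)$ of diameter $\le\lambda^k\diam(X)$, bound $S_\e(X)\le n^k$ for a suitably chosen $k=k(\e)$, and pass to the limit $\e\to 0^+$. The only difference is bookkeeping (your explicit bound $k(\e)\le 1+\ln(\e/\diam(X))/\ln\lambda$ versus the paper's $\delta$--$k_0$ formulation), which does not change the argument.
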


\begin{proof} The inequality $\Sdim(X)\le\SDim(X)$ follows from the definition of the $S$-dimension $\Sdim(X)$. The inequality $\SDim(X)\le-\frac{\ln(n)}{\ln(\lambda)}$ will follow as soon as for~every $\delta>0$ we find $\e_0>0$ such that for every $\e\in(0,\e_0]$ we get
\begin{equation*}
-\frac{\ln S_\e(X)}{\ln\e}<-\frac{\ln(n)}{\ln(\lambda)}+\delta.
\end{equation*}

Let $D=\diam(X)$ be the diameter of the metric space $X$. Since
\begin{equation*}
\lim_{k\to\infty}\frac{\ln(n^k)}{\ln(\lambda^{k-1}D)}=\lim_{k\to\infty}\frac{k\ln(n)}{(k-1)\ln(\lambda)+\ln D}=\frac{\ln(n)}{\ln(\lambda)},
\end{equation*}
there is $k_0\in\IN$ such that for each $k\ge k_0$ we get
\begin{equation*}
-\frac{\ln(n^k)}{\ln(\lambda^{k-1}D)}<-\frac{\ln(n)}{\ln(\lambda)}+\delta.
\end{equation*}

We claim that the number $\e_0=\lambda^{k_0-1}D$ has the required property. Indeed, given any $\e\in(0,\e_0]$ we can find $k\ge k_0$ with $\lambda^{k}D<\e\le\lambda^{k-1}D$ and observe that
\begin{equation*}
\C_k=\big\{f_{i_1}\circ\dots\circ f_{i_k}(X):i_1,\dots,i_k\in\{1,\dots,n\}\big\}
\end{equation*}
is a cover of $X$ by compact connected subsets having diameter $\le\lambda^kD< \e$. Then
$S_\e(X)\le |\C_k|\le n^k$ and
\begin{equation*}
-\frac{\ln(S_\e(X))}{\ln(\e)}\le -\frac{\ln(n^k)}{\ln(\lambda^{k-1}D)}<-\frac{\ln(n)}{\ln(\lambda)}+\delta.
\end{equation*}
\end{proof}

In the next section we shall construct an example of a rim-finite plane Peano continuum $M$ with~infinite $S$-dimension $\Sdim(M)$. Theorem~\ref{Sdim} implies that the~space $M$ is not homeomorphic to an IFS-attractor and this proves Theorem~\ref{th1}.

\section{The space $M$}\label{s2}

Our space $M$ is a partial case of the spaces constructed in \cite{BT} and called "shark teeth".
Consider the piecewise linear periodic function
\begin{equation*}
\phii(t)=
 \begin{cases}
t-n & \text{if }t\in[n,n+\frac{1}{2}] \text{ for some }n\in\Z,  \\
n-t & \text{if }t\in[n-\frac{1}{2},n] \text{ for some }n\in\Z,
\end{cases}
\end{equation*}
whose graph looks as follows:

\begin{picture}(300,80)(-30,-10)
\put(-20,0){\vector(1,0){300}}
\put(275,-10){$t$}
\put(0,-10){\vector(0,1){70}}
\put(-20,20){\line(1,-1){20}}
\put(0,0){\line(1,1){40}}
\put(40,40){\line(1,-1){40}}
\put(80,0){\line(1,1){40}}
\put(120,40){\line(1,-1){40}}
\put(160,0){\line(1,1){40}}
\put(200,40){\line(1,-1){40}}
\put(240,0){\line(1,1){20}}
\end{picture}

For every $n\in\IN$ consider the function
\begin{equation*}
\varphi_n(t)=2^{-n}\varphi(2^nt),
\end{equation*}
which is a homothetic copy of the function $\varphi(t)$.

Consider the non-decreasing sequence
\begin{equation*}
n_k=\lfloor \log_2 \log_2 (k+1)\rfloor,\quad k\in\IN,
\end{equation*}
where $\lfloor x\rfloor$ is the integer part of $x$. Our example is the continuum
\begin{equation*}
M=[0,1]\times\{0\}\cup
\bigcup_{k=1}^\infty\big\{\big(t,\tfrac1k\varphi_{n_k}(t)\big):t\in[0,1]\big\}
\end{equation*}
in the plane $\IR^2$, which looks as follows:

\begin{figure}[h]
\begin{center}	
\includegraphics[scale=0.9]{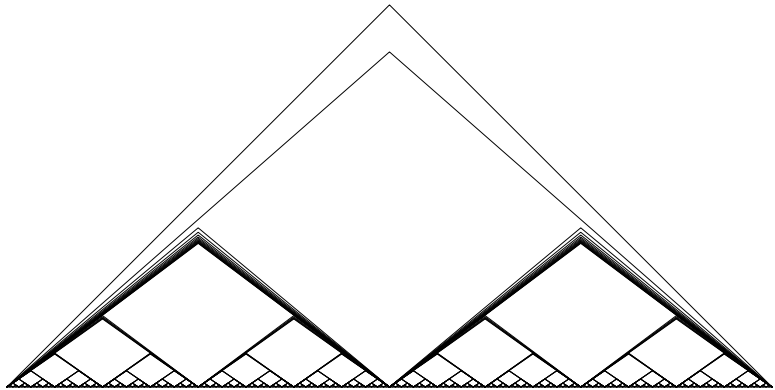}
	\caption{The continuum $M$}
\end{center}
\end{figure}

The following theorem describes some properties of the continuum $M$ and implies Theorem~\ref{th1} stated in the introduction.

\begin{theorem}\label{tM} The space $M$ has the following properties:
\begin{enumerate}
\item $M$ is a rim-finite plane Peano continuum;
\item $\dim(M)=1$ and $\Sdim(X)=\infty$;
\item $M$ is not homeomorphic to an IFS attractor.
\end{enumerate}
\end{theorem}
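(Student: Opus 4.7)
I would prove the parts in order (1), (2), (3), with (3) following from (2) by Theorem~\ref{Sdim}, since $\Sdim$ is a topological invariant and any IFS-attractor has $\Sdim<\infty$: a homeomorphism from $M$ onto an IFS-attractor $Y$ would pull back $Y$'s metric to an admissible metric on $M$ with finite $\SDim$, forcing $\Sdim(M)<\infty$ and contradicting (2).

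For part (1), $M$ is closed and bounded in the compact set $[0,1]\times[0,\tfrac12]$ because $\tfrac1k\varphi_{n_k}\to 0$ uniformly; $M$ is connected since each shark-tooth graph is a continuous image of $[0,1]$ meeting the baseline at $(j\cdot 2^{-n_k},0)$. For rim-finiteness and local connectedness at a baseline point $p=(x,0)$, given $\delta>0$ I would pick $s\in(0,\delta)$ and a generic $r\in(0,\delta)$ so that the vertical sides of the rectangle $V=(x-r,x+r)\times(-s,s)$ miss the countable set of touchpoint and peak $x$-coordinates of graphs with peak height $\ge s$. Then $V\cap M$ is connected (every tent in $V$ hangs from the baseline segment through $p$) and $\partial V\cap M$ is finite (only the finitely many graphs of peak height $\ge s$ cross $\partial V$, each in finitely many points). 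Combined with the trivial off-baseline neighborhoods, this shows $M$ is a rim-finite Peano continuum, hence $\dim M\le 1$.

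For part (2), $\dim M\ge 1$ because $M\supset[0,1]\times\{0\}$, leaving $\Sdim(M)=\infty$ as the substantive claim. My approach is to show for every admissible metric $d$ on $M$ that $S_\varepsilon(M,d)$ grows super-polynomially in $1/\varepsilon$. For the Euclidean metric the calculation is explicit: for each level $n<\log_2(1/\varepsilon)$ and each of the $\sim 2^{2^{n+1}}$ graphs $G_k$ with $n_k=n$, horizontal positions midway between consecutive touchpoints of $G_k$ force disjoint covering sets for different graphs at the same level, because a connected set of Euclidean diameter $<\varepsilon$ cannot span the $\ge\varepsilon$-wide touchpoint-free horizontal gap needed to connect arc-pieces of different graphs via the baseline, and two graphs at the same level are disjoint outside the baseline. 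Telescoping the counts $2^{2^{n+1}}-2^{2^n}$ of graphs per level against the $\sim 1/\varepsilon$ middle positions per graph gives $S_\varepsilon(M,d_{\mathrm{Eucl}})\ge 2^{1/\varepsilon}/\varepsilon$, which establishes $\SDim(M,d_{\mathrm{Eucl}})=\infty$.

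The main obstacle is extending this count to an arbitrary admissible metric $d$. Letting $\omega(\varepsilon)$ denote a modulus of continuity for the horizontal projection $\pi\colon(M,d)\to[0,1]$, the same combinatorial argument formally yields $S_\varepsilon(M,d)\ge 2^{1/\omega(\varepsilon)}/\omega(\varepsilon)$, but this is not automatically super-polynomial in $1/\varepsilon$ when $\omega$ tends to $0$ very slowly. The crux is therefore to supplement the projection argument with an additional topological constraint that forces $\omega(\varepsilon)$ to decay sufficiently fast for any admissible $d$—or, more intrinsically, to invoke a metric-free lower bound on $\Sdim$ from \cite{BT} that exploits the doubly exponential graph count $2^{2^{n+1}}$ per level directly as a topological feature of $M$. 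Making this metric-invariant version of the super-polynomial bound precise is where I expect the proof to draw essentially on the shark-teeth machinery of \cite{BT}, and it is the step I expect to consume most of the argument.
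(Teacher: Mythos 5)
Your treatment of (1), of $\dim(M)=1$, and of the deduction of (3) from (2) via Theorem~\ref{Sdim} is correct and coincides with the paper's route. The genuine gap is in the heart of (2): $\Sdim(M)=\infty$ is a statement about \emph{every} metric generating the topology of $M$, and what you actually argue is only a lower bound for $S_\e(M,d)$ for the Euclidean metric, which gives $\SDim(M,d_{\mathrm{Eucl}})=\infty$ but says nothing about the infimum over all admissible metrics. You acknowledge this yourself: for an arbitrary metric your projection argument degrades to $S_\e\ge 2^{1/\omega(\e)}/\omega(\e)$ with an uncontrolled modulus $\omega$, and you defer the metric-invariant bound to unspecified ``shark-teeth machinery'' of \cite{BT}. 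Since that metric-invariant bound \emph{is} the substance of the theorem --- everything else is routine --- leaving it as an expected-to-work appeal to \cite{BT} is a missing step, not a proof.

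For the record, the paper closes exactly this gap by one precise citation: $M$ is homeomorphic to the shark-teeth space $W_{\vec m}$ of \cite{BT} for the sequence $\vec m=(2^{n_k})_{k=1}^\infty$, and Theorem~7.3(6) of \cite{BT} gives $\Sdim W_{\vec m}=\infty$ provided $\lim_{k\to\infty}2^{n_k}/k^\alpha=0$ for every $\alpha>0$ --- the condition that the choice $n_k=\lfloor\log_2\log_2(k+1)\rfloor$ was engineered to satisfy, since the teeth count $2^{n_k}$ is roughly $\log_2 k$ and hence sub-polynomial in $k$. Note that the operative hypothesis is this slow growth of the teeth-count sequence; it encodes the same data as your ``doubly exponential number of graphs per level,'' but it is the form in which the criterion is stated and verified. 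Once that theorem is invoked, your explicit Euclidean count (which in any case needs care about possible connections between graphs of different levels near the baseline) becomes superfluous. So your plan points at the right reference, but to complete the proof you must either quote the exact result from \cite{BT} and check its hypothesis, or reprove the metric-free lower bound, which is substantially harder than anything you have written.
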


\begin{proof} It is easy to see that $X$ is a rim-finite plane Peano continuum.
The rim-finiteness of $M$ implies that $\dim(M)=1$. To show that $\Sdim(M)=\infty$, consider the number sequence $\vec m=(2^{n_k})_{k=1}^\infty$ and observe that the space $M$ is homeomorphic to the ``shark teeth'' space $W_{\vec m}$ considered in \cite{BT}. Taking into account that
\begin{equation*}
\lim_{k\to\infty}\frac{2^{n_k}}{k^\alpha}=0\mbox{ \ \ for any $\alpha>0$}
\end{equation*}
 and applying Theorem~7.3(6) of \cite{BT}, we conclude that $\Sdim M=\Sdim W_{\vec m}=\infty$.
By Theorem~\ref{Sdim}, the space $M$ is not homeomorphic to an IFS-attractor.
\end{proof}

\section{Some Open Questions}

We shall say that a compact topological space $X$ is a {\em topological IFS-attractor} if $X=\bigcup_{i=1}^nf_i(X)$ for some continuous maps $f_1,\dots,f_n:X\to X$ such for any open cover $\U$ of $X$ there is $m\in\IN$ such that for any functions $g_1,\dots,g_m\in\{f_1,\dots,f_n\}$ the set $g_1\circ\dots\circ g_m(X)$ lies in some set $U\in\U$. It is easy to see that each IFS-attractor is a topological IFS-attractor and each connected topological IFS-attractor is metrizable and locally connected.

\begin{problem} Is each (finite-dimensional) Peano continuum a topological IFS-attrac\-tor? In particular, is the space $M$ constructed in Theorem~\ref{tM} a topological IFS-attractor?
\end{problem}

\section{Acknowledgment}

The authors express their sincere thanks to Wies{\l}aw Kubi\'s for interesting discussions and valuable comments.

\bibliographystyle{amsplain}

\end{document}